\theoremstyle{plain}
\newtheorem{remark}{Remark}
\newtheorem{theorem}{Theorem}
\numberwithin{equation}{section}
\begin{document}
\title[Discontinuous multiplication]{Multiplication is discontinuous in the Hawaiian earring group (with the
quotient topology)}
\author{Paul Fabel}
\address{Drawer MA, Mississippi State, MS 39762}
\email[Fabel]{fabel@ra.edu}
\urladdr{http://www2.msstate.edu/\symbol{126}fabel/}
\date{December 1 2009}
\subjclass{Primary 54G20; Secondary 54B15}
\keywords{Hawaiian earring}

\begin{abstract}
The natural quotient map $q$ from the space of based loops in the Hawaiian
earring onto the fundamental group provides a new example of a quotient map
such that $q\times q$ fails to be a quotient map. This provides a
counterexample to the question of whether the fundamental group (with the
quotient topology) of a compact metric space is always a topological group
with the standard operations.
\end{abstract}

\maketitle

\section{Introduction}

Following the definitions in \cite{Biss}, there is a natural quotient
topology one can impart on the familiar based fundamental group $\pi
_{1}(X,p)$ of a topological space $X.$

If $L(X,p)$ denotes the space of $p$ based loops in $X$ with the compact
open topology, and if $q:L(X,p)\rightarrow \pi _{1}(X,p)$ is the natural
surjection, endow $\pi _{1}(X,p)$ with the quotient topology such that $%
A\subset \pi _{1}(X,p)$ is closed in $\pi _{1}(X,p)$ if and only of $%
q^{-1}(A)$ is closed in $L(X,p).$

As pointed out in \cite{Fab1} and \cite{Calcut}, it became an open question
whether or not group multiplication is always continuous in $\pi _{1}(X,p),$
various papers have stumbled on the issue, and in the paper at hand we
settle the question in the negative via counterexample (Theorem \ref{main1})
in which $X$ is the compact metric space the Hawaiian earring.

It is well known that outside the category of k-spaces ( compactly generated
spaces \cite{Steen}) if $q:Y\rightarrow Z$ is a quotient map, the map $%
q\times q:Y\times Y\rightarrow Z\times Z$ can fail to be a quotient map.
However familiar counterexamples of such phenomenon (Ex. 8 p. 141 \cite
{Munkres}) necessarily involve somewhat strange spaces, since for example it
cannot happen that both $Y$ and $Z$ are metrizable.

In this paper we take $X=HE$ to be the Hawaiian earring, the union of a null
sequence of circles joined at the common point $p$.

In the bargain we obtain a naturally occurring example of a quotient\ map $%
q:Y\rightarrow Z$ such that $q\times q:Y\times Y\rightarrow Z\times Z$ fails
to be a quotient map (Theorem \ref{main1}).

The example of the Hawaiian earring (a non locally contractible 1
dimensional Peano continuum) is in a sense the simplest counterexample,
since in general if $X$ is a locally contractible Peano continuum then $\pi
_{1}(X,p)$ is a discrete topological group ( \cite{Fab1} \cite{Calcut}).

Independently Brazas \cite{Brazos} has recently discovered some other
interesting counterexamples of a different flavor in which $X$ fails to be a
regular $(T_{3})$ space \cite{Brazos}.

Moreover the issue of regularity $(T_{3})$ also rears now its head as
follows. As a consequence of Theorem \ref{main1} we know $\pi _{1}(HE,p)$ is
a totally disconnected Hausdorff space but \textbf{not} a topological group
with the familiar operations. This leads to the open questions ``Is $\pi
_{1}(HE,p)$ regular?'', ``What is the inductive dimension of $\pi
_{1}(HE,p)?"$

\section{Main result and implications}

The \textbf{Hawaiian earring }$HE$ is the union of a null sequence of
circles joined at a common point $p.$

Formally $HE$ is the following subspace of the plane $R^{2},$ for an integer 
$n\geq 1$ let $X_{n}$ denote the circle of radius $\frac{1}{n}$ centered at $%
(\frac{1}{n},0)$ and define $HE=\cup _{n=1}^{\infty }X_{n}$.

Let $p=(0,0)$, let $Y_{0}=\{p\},$ and let $Y_{N}=\cup _{n=0}^{N}X_{n}.$

Let $\lim_{\leftarrow }Y_{n}$ denote the inverse limit space determined by
the natural retractions $r_{n}:Y_{n+1}\rightarrow Y_{n},$ such that $%
r_{n|Y_{n}}=id_{|Y_{n}}$ and $r_{n}(X_{n+1})=\{p\}.$

Thus, (as a subspace with the product topology), $\lim_{\leftarrow }Y_{n}$
denotes the space of all sequences $(y_{0},y_{1},y_{2},...)\subset
Y_{0}\times Y_{1}\times Y_{2}...$ such that $y_{n}=r_{n}(y_{n+1})$).

Consider the canonical homeomorphism $j:HE\rightarrow \lim_{\leftarrow
}Y_{n} $ $\ $such that $j(x)=(p,p,...p,x,x,x...).$

If $\phi :\pi _{1}(HE,p)\rightarrow \lim_{\leftarrow }\pi _{1}(Y_{n},p)$ is
the canonical homomorphism induced by $j$, it is a well known but nontrivial
fact that $\phi $ is one to one \cite{MM}.

Moreover as shown in (\cite{Conner} \cite{desmit}), elements of $\pi
_{1}(HE,p)$, can be understood precisely as the `irreducible' infinite words
in the symbols $\{x_{1},x_{2},..\}$ such that no symbol appears more than
finitely many times.

In particular $\pi _{1}(HE,p)$ naturally contains a subgroup canonically
isomorphic to the free group over symbols $\{x_{1},x_{2},..\}$.

Let $L(HE,p)$ denote the space of maps $f:[0,1]\rightarrow HE$ such that $%
f(0)=f(1)=p$ and impart $L(HE,p)$ with the topology of uniform convergence.

Let $q:L(HE,p)\rightarrow \pi _{1}(HE,p)$ denote the canonical quotient map
such that $q(f)=q(g)$ if and only if $f$ and $g$ are path homotopic in $HE.$

Let $q_{n}=(\frac{2}{n},0)\in X_{n}.$ Define the \textbf{oscillation number }%
$O_{n}:L(HE,p)\rightarrow \{0,1,2,3,4,...\}$ to be the maximum number $m$
such that there exists a set $T=\{0,t_{1},t_{2},...,t_{2m}\}\subset \lbrack
0,1]$ such that $0<t_{1}<t_{2}..<t_{2m}=1$ with $f(t_{2i})=p$ and $%
f(t_{2i+1})=q_{n}.$

Uniform continuity of $f$ over $[0,1]$ ensures that $O_{n}(f)<\infty $.
Other elementary properties of $O_{n}$ are observed or established in the
following remarks. (See also \cite{fab4}).

\begin{remark}
\label{osc} Suppose $f_{k}\rightarrow f$ uniformly in $L(HE,p)$ and $%
O_{n}(f_{k})\geq m$ as shown by the sets $T_{k}\subset \lbrack 0,1]$ such
that $\left| T_{k}\right| =2m+1.$ Then if $T\subset \lbrack 0,1]$ is a
subsequential limit of $\{T_{k}\}$ in the Hausdorff metric, then $T$ shows $%
O_{n}(f)\geq m.$
\end{remark}

\begin{remark}
\label{dom} Suppose $f$ and $g$ are in the same path component of $L(HE,p)$
and suppose $g:[0,1]\rightarrow Y_{m}$ is a geodesic corresponding to a
maximally reduced finite word $w$ in the free group $F_{m}$ on $m$ letters.
Then $O_{n}(f)\geq O_{n}(g).$ ( Replacing $f$ by $f_{1}=r_{m}(f)$ we have $%
O_{n}(f)\geq O_{n}(f_{1}).$ For each nontrivial interval $J\subset
im(f_{1})\backslash \{p\}$ such that $f_{1\overline{J}}$ is an inessential
based loop, replace $f_{J}$ by the constant map to obtain $f_{2}$ and
observe $O_{n}(f_{2})\leq O_{n}(f_{1})$ and let $v\in F_{m}$ denote the
(unreduced) finite word corresponding to $f_{2}$. Then $\left| v\right| \geq
\left| w\right| $ and hence $O_{n}(f_{2})\geq O_{m}(g).)$
\end{remark}

\begin{remark}
\label{onetoone}Since $\phi :\pi _{1}(HE,p)\rightarrow \lim_{\leftarrow }\pi
_{1}(Y_{n},p)$ is one to one, the path components of $L(HE,p)$ are closed
subspaces of $L(HE,p)$. (If $\{f_{n}\}$ is a sequence of homotopic based
loops in $HE,$ and if $f_{n}\rightarrow f$ and then $r_{N}(f_{n})\rightarrow
r_{N}(f)$ for each $N.$ Since $Y_{N}$ is locally contractible $r_{N}(f_{n})$
is eventually path homotopic to $r_{N}(f).$ Thus, since $\phi $ is one to
one, $f$ is path homotopic to $f_{1}$ in $HE).$ See also the proof of
Theorem 2 \cite{fab4}.
\end{remark}

\begin{remark}
\label{comp} If $Z$ is a metric space such that each path component of $Z$
is a closed subspace of $Z,$ then each path component of $Z\times Z$ is a
closed subspaces of $Z\times Z.$ ( If $(x_{n},y_{n})\rightarrow (x,y)$ and $%
\{(x_{n},y_{n})\}$ is in a path component of $Z\times Z$ then obtain paths $%
\alpha $ and $\beta $ in $Z$ connecting $x$ to $\{x_{n}\}$ and $y$ to $%
\{y_{n}\}$ and $(\alpha ,\beta )$ is the desired path in $Z\times Z$).
\end{remark}

What follows is the main result of the paper.

\begin{theorem}
\label{main1} The product of quotient maps $q\times q:L(HE,p)\times
L(HE,p)\rightarrow \pi _{1}(HE,p)\times \pi _{1}(HE,p)$ fails to be a
quotient map, standard multiplication ( by path class concatenation) $M:\pi
_{1}(HE,p)\times \pi _{1}(HE,p)\rightarrow \pi _{1}(HE,p)$ is discontinuous,
and the fundamental group $\pi _{1}(HE,p)$ fails to be a topological group
with the standard group operations.
\end{theorem}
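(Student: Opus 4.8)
The plan is to exhibit an explicit sequence in $\pi_1(HE,p)\times\pi_1(HE,p)$ demonstrating that multiplication is discontinuous, and then deduce the other two assertions formally. Discontinuity of $M$ forces $q\times q$ not to be a quotient map (if it were, then $M\circ(q\times q)$ continuous would give $M$ continuous), and it also forces $\pi_1(HE,p)$ not to be a topological group (a topological group has continuous multiplication by definition). So the crux is the discontinuity of $M$.

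To produce the witnessing sequence, I would work with the distinguished elements $x_n\in\pi_1(HE,p)$ represented by the standard loop once around $X_n$. Consider the point $g=(a,b)\in\pi_1(HE,p)\times\pi_1(HE,p)$ where $a$ is the class of the ``infinite product'' loop $x_1 x_2 x_3\cdots$ (traversing the circles $X_n$ in shrinking time on $[0,1]$) and $b$ is the class of its formal inverse $\cdots x_3^{-1}x_2^{-1}x_1^{-1}$, so that $M(a,b)=1$ is trivial. Now approximate: let $a_k$ be the class of $x_1 x_2\cdots x_k$ (a finite word, a geodesic in $Y_k$) and $b_k$ the class of $x_k^{-1}\cdots x_1^{-1}$; then $a_k\to a$ and $b_k\to b$ in $\pi_1(HE,p)$ because suitable loop representatives converge uniformly in $L(HE,p)$ and $q$ is continuous. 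The products $M(a_k,b_k)=1$ are all trivial, so along \emph{this} particular sequence nothing goes wrong; the real construction must be more delicate. The right move is to shift indices: consider $c_k = M(a,\,x_k b x_k^{-1}\cdots)$ type perturbations, or more cleanly, take $g_k=(a,b)$ fixed but observe that $M$ fails to be continuous at $(a,b)$ by finding a \emph{closed} set $C\subset \pi_1(HE,p)$ with $1\in C$ but $(a,b)\notin \overline{M^{-1}(C)}$ pulled back appropriately --- in practice the cleanest route is: find a sequence $(\alpha_k,\beta_k)\to(\alpha,\beta)$ in the product with $M(\alpha_k,\beta_k)\not\to M(\alpha,\beta)$, detected by the oscillation numbers $O_n$.

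Concretely, here is the mechanism I expect to use. Pick the target point to be $(\alpha,\beta)$ with $\alpha\beta$ equal to an infinite irreducible word $w$ that uses every symbol $x_n$ exactly once, e.g.\ $w=x_1x_2x_3\cdots$, split as $\alpha = x_1x_3x_5\cdots$ (odd circles) and $\beta = $ the interleaving completion --- arrange $\alpha,\beta$ so each is represented by a convergent loop. Then build $\alpha_k,\beta_k$ as finite-word truncations whose \emph{concatenation} $\alpha_k\beta_k$, after reduction, has small $O_n$ for the relevant $n$ (because the finite words cancel or rearrange), while the limit $\alpha\beta=w$ has $O_n(w)\ge 1$ for all $n$. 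Using Remark~\ref{dom}, $O_n$ is monotone under passing from a loop to the geodesic of its reduced word within a path component, and by Remark~\ref{osc} $O_n$ is lower semicontinuous on $L(HE,p)$; combining these, the function $g\mapsto O_n$ descends to a well-defined, lower-semicontinuous function on each of $\pi_1$ and hence detects that $q(\alpha_k\beta_k)$ cannot converge to $q(\alpha\beta)=q(w)$. Since by contrast $(\alpha_k,\beta_k)\to(\alpha,\beta)$ in the product topology (again because $q$ is continuous and the chosen representatives converge uniformly), $M$ is discontinuous at $(\alpha,\beta)$. Remarks~\ref{onetoone} and~\ref{comp} guarantee that path components of $L(HE,p)$ and of the square are closed, which is what legitimizes transporting the oscillation estimates along the homotopies used when comparing $f$ to the geodesic representative of its word.

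The main obstacle, I expect, is the bookkeeping in the middle step: one must choose the splitting $w=\alpha\beta$ and the truncations $\alpha_k,\beta_k$ so that three things hold simultaneously --- (i) $\alpha_k\to\alpha$ and $\beta_k\to\beta$ in $\pi_1(HE,p)$, which is delicate because convergence here means convergence of \emph{some} loop representatives uniformly, not of the words themselves; (ii) the reduced words of $\alpha_k\beta_k$ have a controllably small $O_n$ (for a fixed $n$, eventually $O_n(\alpha_k\beta_k)$ should be $0$ or bounded), which requires the finite concatenations to reduce away the $n$-th circle; and (iii) $O_n(\alpha\beta)\ge 1$, i.e.\ the symbol $x_n$ genuinely survives in the infinite reduced word $w$. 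Balancing (ii) against (iii) is the heart of the matter and is exactly the phenomenon — finite cancellation that disappears in the limit — that makes multiplication discontinuous; everything else (the two formal deductions, the semicontinuity of $O_n$, the closedness of path components) is routine given the Remarks.
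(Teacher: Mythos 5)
Your two formal reductions are sound: $M\circ (q\times q)$ is $q$ composed with loop concatenation, hence continuous, so discontinuity of $M$ would indeed force $q\times q$ not to be a quotient map and $\pi _{1}(HE,p)$ not to be a topological group. The gap is that the discontinuity of $M$ --- which you correctly identify as the crux --- is never established, and the mechanism you propose points the wrong way. Remark \ref{osc} says that if $f_{k}\rightarrow f$ uniformly then $O_{n}(f)\geq \lim \sup_{k}O_{n}(f_{k})$: the sets $\{f:O_{n}(f)\geq m\}$ are closed, not open. So a sequence of product loops with \emph{small} $O_{n}$ whose limit has $O_{n}\geq 1$ yields no contradiction at all; your scheme would need $\{f:O_{n}(f)\geq 1\}$ to be \emph{open}, which is false (perturb a loop so that it barely misses $q_{n}$ and $O_{n}$ drops to $0$). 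The only way Remarks \ref{osc} and \ref{dom} produce a contradiction is the opposite configuration: approximating loops with \emph{unbounded} oscillation converging to a loop whose oscillation must be finite. Moreover, to show $M(\alpha _{k},\beta _{k})\not\rightarrow M(\alpha ,\beta )$ you must exhibit a set containing the products that is closed \emph{in the quotient topology} (i.e.\ has closed $q$-preimage in $L(HE,p)$) and misses $M(\alpha ,\beta )$; your sketch never supplies one, and invariants of the type ``winding number around $X_{n}$'' factor through the discrete groups $\pi _{1}(Y_{n},p)$, where multiplication is continuous, so they cannot detect the failure.

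The deeper problem is the insistence on witnessing discontinuity by a single convergent sequence. The paper's counterexample is the doubly indexed family $F=\{([a(n,k)],[w(n,k)])\}$ with $a(n,k)\simeq (x_{n}x_{k}x_{n}^{-1}x_{k}^{-1})^{n+k}$ and $w(n,k)\simeq (x_{1}x_{k}x_{1}^{-1}x_{k}^{-1})^{n}$, and $([P],[P])$ is a limit point of $F$ only in the neighborhood sense: given a neighborhood $U$ of $[P]$ one first chooses $N$ \emph{depending on} $U$ so that $a(N,k)\in q^{-1}(U)$ for all large $k$, and only then chooses $k$ so that $w(N,k)\in q^{-1}(U)$ as well. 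No sequence in $F$ converges to $([P],[P])$: the estimates $O_{1}(w(n,k))=2n$ and $O_{N}(a(N,k))\geq 2(N+k)$, together with Remarks \ref{onetoone} and \ref{comp}, show that any subfamily whose indices escape to infinity is a closed subset of $\pi _{1}\times \pi _{1}$ missing $([P],[P])$, while a subfamily with bounded indices takes only finitely many values. The phenomenon is genuinely non-sequential, so the step you defer as ``bookkeeping'' --- arranging truncations that converge coordinatewise while their products stay in a quotient-closed set away from the limit product --- is the entire content of Theorem \ref{main1}, and it is resolved in the paper not by a one-parameter truncation scheme but by the two-parameter family $F$ together with a proof that $M(F)$ and $(q\times q)^{-1}(F)$ are closed.
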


\begin{proof}
Let $x_{n}\in L(HE,p)$ orbit $X_{n}$ once counterclockwise.

Applying path concatenation, for integers $n\geq 2$ and $k\geq 2$ let $%
a(n,k)\in L(HE,p)$ be a based loop corresponding to the finite word $%
(x_{n}x_{k}x_{n}^{-1}x_{k}^{-1})^{k+n}$ and let $w(n,k)\in L(HE,p)$ be a
based loop corresponding to the finite word $%
(x_{1}x_{k}x_{1}^{-1}x_{k}^{-1})^{n}.$

Let $F\subset \pi _{1}(HE,p)\times \pi _{1}(HE,p)$ denote the set of all
doubly indexed ordered pairs $([a(n,k)],[w(n,k)]).$

Let $P\in L(HE,p)$ denote the constant map such that $f([0,1])=\{p\}.$

To prove $q\times q$ fails to be a quotient map it suffices to prove that $F$
is not closed in $\pi _{1}(HE,p)\times \pi _{1}(HE,p)$ but that $(q\times
q)^{-1}(F)$ is closed in $L(HE,p)\times L(HE,p).$

To prove $F$ is not closed in $\pi _{1}(HE,p)\times \pi _{1}(HE,p)$ we will
prove that $([P],[P])\notin F$ but $([P],[P])$ is a limit point of $F.$

Recall $\phi :\pi _{1}(HE,p)\rightarrow \lim_{\leftarrow }\pi _{1}(Y_{n},p)$
is one to one and $k\geq 2.$ Thus $[P]\neq \lbrack w(n,k)]$ and $[P]\neq
\lbrack a(n,k)].$ Thus $([P],[P])\notin F.$

Suppose $[P]\in U$ and $U$ is open in $\pi _{1}(HE,p).$ Let $V=q^{-1}(U).$
Then $V$ is open in $L(HE,p)$ since, by definition, $q$ is continuous.

Note $P\in V.$ Thus there exists $N$ and $K$ such that if $n\geq N$ and $%
k\geq K$ then $a(n,k)\in V.$

Note $(x_{1}x_{1}^{-1})^{N}$ is path homotopic to $P$ and hence $%
(x_{1}x_{1}^{-1})^{N}\in V.$ Note (suitably parameterized over $[0,1]$), $%
w(N,k)\rightarrow (x_{1}x_{1}^{-1})^{N}$ uniformly in $L(HE,p).$

Thus there exists $K_{2}\geq K$ such that if $k\geq K_{2}$ then $w(N,k)\in
V. $ Hence $([w(N,K_{2})],[a(N,K_{2})])\in U\times U.$

This proves $([P],[P])$ is a limit point of $F,$ and thus $F$ is not closed
in $\pi _{1}(HE,p)\times \pi _{1}(HE,p).$

To prove $(q\times q)^{-1}(F)$ is closed in $L(HE,p)\times L(HE,p)$ suppose $%
(f_{m},g_{m})\rightarrow (f,g)$ uniformly and $(f_{m},g_{m})\in (q\times
q)^{-1}(F).$

Note $O_{1}(w(n,k))=2n$ and $O_{N}(a(N,k))\geq 2(N+k).$

Let $a(n_{m},k_{m})$ and $w(n_{m},k_{m})$ be path homotopic to respectively $%
f_{m}$ and $g_{m}.$

By Remark \ref{dom} $O_{1}(g_{m})\geq O_{1}(w(n_{m},k_{m}))=2n_{m}.$

Thus if $\{n_{m}\}$ contains an unbounded subsequence then, by Remark \ref
{osc}, $O_{1}(g)\geq \lim \sup O_{1}(w(n_{m},k_{n_{m}}))=\infty $ and we
have a contradiction since $O_{1}(g)<\infty .$

Thus $\{n_{m}\}$ is bounded and the sequence $\{n_{m}\}$ takes on finitely
many values.

In similar fashion, if $\{k_{m}\}$ is unbounded then there exists $N$ and a
subsequence $\{k_{m_{l}}\}$ such that $O_{N}(a(N,k_{m_{l}})\rightarrow
\infty .$ Thus $O_{N}(f)\geq \lim \sup O_{N}(a(N,k_{m_{l}}))=\infty $
contradicting the fact that $O_{N}(f)<\infty .$

Thus both $\{n_{m}\}$ and $\{k_{m}\}$ are bounded and hence (by the pigeon
hole principle) there exists a path component $A\subset L(HE,p)\times
L(HE,p) $ containing a subsequence $(f_{m_{l}},g_{m_{l}}).$

It follows from Remarks \ref{onetoone} and \ref{comp} that $(f,g)\in A.$

Thus $(q\times q)^{-1}(F)$ is closed and hence $q\times q$ fails to be a
quotient map.

In similar fashion we will prove that group multiplication $M:\pi
_{1}(HE,p)\times \pi _{1}(HE,p)\rightarrow \pi _{1}(HE,p)$ is discontinuous,
and hence $\pi _{1}(HE,p)$ will fail to be a topological group with the
standard group operations. To achieve this we will exhibit a closed set $%
A\subset \pi _{1}(HE,p)$ such that $M^{-1}(A)$ is not closed in $\pi
_{1}(HE,p)\times \pi _{1}(HE,p).$

Consider the doubly indexed set $A=M(F)\subset \pi _{1}(HE,p)$ such that
each element of $A$ is of the form $[a(n,k)]\ast \lbrack w(n,k)]$ (with $%
\ast $ the denoting familiar path class concatenation).

On the one hand observe by definition (and since $\phi $ is one to one) $%
[a(n,k)]\ast \lbrack w(n,k)]\neq \lbrack P].$

Thus $[P]\notin A$ and $([P],[P])\notin M^{-1}(A).$ Note $F\subset M^{-1}(A)$
and by the previous argument $([P],[P])$ is a limit point of $F.$ Thus $%
M^{-1}(A)$ is not closed in $\pi _{1}(HE,p)\times \pi _{1}(HE,p).$

On the other hand we will prove $A$ is closed in $\pi _{1}(HE,p)$ by proving 
$q^{-1}(A)$ is closed in $L(HE,p).$ Suppose $f_{m}\rightarrow f\in L(HE,p)$
and $f_{m}\in q^{-1}(A).$

Obtain $n_{m}$ and $k_{m}$ such that $f_{m}\in \lbrack a(n,k)]\ast \lbrack
w(n,k)].$

In similar fashion to the previous proof, if $\{n_{m}\}$ is unbounded we
obtain the contradiction $O_{1}(f)\geq \lim \sup O_{1}(f_{m})=\infty .$

If $\{n_{m}\}$ is bounded and $\{k_{m}\}$ is unbounded we obtain $N$ and a
subsequence $k_{m_{l}}$ and the contradiction $O_{N}(f)\geq \lim \sup
O_{N}(f_{m_{l}})=\infty .$

Thus both $\{n_{m}\}$ and $\{k_{m}\}$ are bounded. It follows by the pigeon
hole principle that some path component $B\subset L(HE,p)$ contains a
subsequence $\{f_{m_{l}}\}$ and it follows from Remark \ref{onetoone} that $%
f\in B.$ Hence $q^{-1}(A)$ is closed in $L(HE,p)$ and thus $A$ is closed in $%
\pi _{1}(HE,p).$
\end{proof}

\end{document}